\title{Functions of Substitution Tilings as a Jacobian}
\author{
Yaar Solomon 
	\vspace{0.1cm}\\
Department of Mathematics,\\
Ben-Gurion University of The Negev,\\
Beer-Sheva, Israel.
	\vspace{0.1cm}\\
yaars@bgu.ac.il 
}
\date{December 25, 2011}
\newcommand{\norm}[1]{\left\|{#1}\right\|}
\newcommand{\absolute}[1] {\left|{#1}\right|}
\newcommand{\pair}[2] {\left\langle{#1},{#2}\right\rangle}
\newcommand{\N}{\mathbb{N}}
\newcommand{\Z}{\mathbb{Z}}
\newcommand{\R}{\mathbb{R}}
\newcommand {\ignore}[1]  {}
\newtheorem{thm}{Theorem}[section]
\theoremstyle{definition}
\newtheorem{definition}[thm]{Definition}
\theoremstyle{plain}
\newtheorem{lem}[thm]{Lemma}
\newtheorem{prop}[thm]{Proposition}
\newtheorem{cor}[thm]{Corollary}
\newtheorem{claim}[thm]{Claim}
\newtheorem{remark}[thm]{Remark}
\theoremstyle{remark}
\begin{document}

\maketitle
\begin{abstract}
A tiling $\tau$ of the Euclidean space gives rise to a function $f_\tau$, which is constant $1/\absolute{T}$ on the interior of every tile $T$. In this paper we give a local condition to know when $f_\tau$, that is defined by a primitive substitution tiling of the Euclidean space, can be realized as a Jacobian of a biLipschitz homeomorphism of $\R^d$. As an example we show that this condition holds for any star-shaped substitution tiling of $\R^2$. In particular the result holds for any Penrose tiling. 
\end{abstract}

\section{Introduction}
\begin{definition}\label{f_tau}
Let $\tau$ be a tiling of $\R^d$. Define a function $f_\tau:\R^d\to\R$ by
\[f_\tau(x)=\begin{cases} \frac{1}{\absolute{T}},& \mbox{there exists a tile } T \mbox{ in }\tau \mbox{ such that } x\in int(T) \\
0 ,& \mbox{ otherwise}\end{cases}\]
where $\absolute{T}$ is the volume of $T$, and $int(T)$ is the interior of $T$.
\end{definition}
We study the question whether the function $f_\tau$, of a given tiling of $\R^d$, can be realized as the Jacobian of a biLipschitz homeomorphism of the space. This question is related to the following question about separated nets: Given a separated net $Y\subseteq\R^d$, is it biLipschitz equivalent to $\Z^d$? i.e., is there a biLipschitz bijection $\phi:Y\to\Z^d$? The connection between these questions can be divided into two parts. First, it was shown by Burago and Kleiner in \cite{BK98} and independently by McMullen in \cite{McM98}, that every separated net is biLipschitz to $\Z^d$ if and only if every function $f\in L^\infty(\R^d)$, with $\inf f>0$, can be realized as a Jacobian of a biLipschitz homeomorphism of $\R^d$. From this they deduced that there is a separated net which is not biLipschitz equivalent to $\Z^d$. Secondly, if $\tau$ is a tiling of $\R^d$, and $Y$ is a separated net which is obtained from $\tau$ by placing one point in each tile, then knowing that $f_\tau$ can be realized as a Jacobian implies that $Y$ is biLipschitz to $\Z^d$ (see \cite{BK02} Lemma $4.1$, and \cite{McM98} Theorem $4.1$). It was shown in \cite{S11} that primitive substitution tilings give rise to separated nets which are biLipshitz equivalent to $\Z^2$. In view of the discussion above, McMullen raised the question whether the functions $f_\tau$ are Jacobians. This question is answered affirmatively in this paper. 

In this paper we will not consider general functions $f$ but rather only functions which, via Definition \ref{f_tau}, come from tilings of $\R^d$ with finitely many distinct tiles, up to isometry. To motivate this, recall that every tiling $\tau$ of $\R^d$ gives rise to a separated net $Y_\tau$, simply by placing a point in each tile. It is easy to verify that any two such nets are in the same biLipschitz equivalence class. We claim that the converse is also true, namely, for every separated net $Y$ one can define a tiling $\tau_Y$ such that any $Y_{\tau_Y}$ is biLipschitz equivalent to $Y$. The tiling $\tau_Y$ can be defined in a similar way to the Voronoi cells. Divide the plane to small enough dyadic cubes $Q$, with respect to the constant that determines the minimal distance between two points in $Y$. To every $y\in Y$ assign the tile 
\[T_y=\bigcup\left\{\mbox{closed cubes } Q: Q\mbox{ is closer to } y \mbox{ than to any other } z\in Y\right\}.\] 
Squares with equal minimal distance to several points, can be added arbitrarily to a $T_y$, for one of the $y$'s with minimal distance from it. It is easy to see that the tiling $\tau_Y$, which is defined by these tiles, satisfies the requirements, and also has finitely many different tiles. By the result from \cite{BK02} and \cite{McM98} that we mentioned above, if $Y$ is a separated net which is not biLipschitz to $\Z^d$, then $f_{\tau_Y}$ cannot be realized as a Jacobian. Moreover, it follows from this construction that when studying which nets are biLipschitz equivalent to $\Z^d$, it is enough to consider separated nets that come from tilings with finitely many tiles.

Since not all $f_\tau$ can be realized as a Jacobian, we restrict ourselves to a special class of tilings - substitution tilings. In our main theorem, Theorem \ref{Main_Thm}, we show that $f_\tau$ of a primitive substitution tiling can be realized as a Jacobian if a local condition on the basic tiles is satisfied. 

In \cite{BK02} Burago and Kleiner gave a sufficient condition for a separated net to be biLipschitz equivalent to $\Z^2$. In fact, they gave a condition that says when a function $f:\R^2\to(0,\infty)$, which is constant on unit lattice squares and with $\inf f>0$, can be realized as a Jacobian of a biLipschitz homeomorphism of the plane. Their proof uses a sequence of partitions of the plane to larger and larger dyadic squares. The main idea of our proof is to look at this partition to dyadic squares as a special case of a substitution tiling of the space. 
The second ingredient of our proof is a property of primitive substitution tilings that was obtained in \cite{S11}, see Proposition \ref{Discrepancy_estimate}. 

In the main results that are stated below we use basic terminology from the theory of substitution tiling. We refer the reader to \S $2$ for the definitions of those terms. In the context of substitution tiling, every tile $T$ has a natural partition to smaller tiles, induced by the partitions of the substitution rule on the finite collection of basic tiles. We say that a function $f:T\to\R$ is a \emph{weight function} if it is constant and positive on the interiors of the tiles in the partition of $T$. 
\begin{thm}\label{Main_Thm}
Let $\tau$ be a primitive substitution tiling of $\R^d$. Suppose that there is a constant $C$ with the following property: for every basic tile $\mathcal{T}$, and for every weight function $f:\mathcal{T}\to(0,\infty)$, there is a biLipschitz homeomorphism $\varphi:\mathcal{T}\to\mathcal{T}$ with 
\begin{equation}\label{Properties_of_local_homeo}
\varphi|_{\partial\mathcal{T}}=id,\quad Jac(\varphi)=\frac{\absolute{\mathcal{T}}}{\int_{\mathcal{T}}f}\cdot f\quad\mbox{a.e.}\quad\mbox{ and }\quad biLip(\varphi)\le\left(\frac{\max f}{\min f}\right)^C.
\end{equation}
Then there exists a biLipschitz homeomorphism $\Phi:\R^d\to\R^d$ with
$Jac(\Phi)=f_\tau\quad\mbox{a.e.}$
\end{thm}

\begin{remark}
The assumption in the theorem is stronger than what actually needed. In the proof we only use the assumption for weight functions with values which are averages of $f_\tau$ on tiles of the $\tau_m$'s. In particular, we only use it for bounded functions, $f(x)\in[1/L,L]$, where $L>0$ 
depend on $\tau$.
\end{remark}

\begin{thm}\label{Main_McMullen's_question}
For any primitive star-shaped substitution tiling $\tau$ of $\R^2$, there is a biLipschitz homeomorphism $\Phi:\R^2\to\R^2$ such that $Jac(\Phi)=f_\tau$ a.e.
\end{thm}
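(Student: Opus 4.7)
The plan is to reduce the theorem to a discrepancy statement about tile counts and then invoke the substitution-tiling analogue of the Burago--Kleiner sufficient condition. Proposition \ref{3.1_for_SS} is tailored exactly to this situation: it asserts that $f_\tau$ can be realized as a Jacobian as long as the averages of $f_\tau$ over supertiles at each level of the substitution hierarchy satisfy a summable Burago--Kleiner-type comparison inequality. So I would reduce the problem to verifying this combinatorial condition. To set up notation, let $\sigma$ denote the substitution with inflation factor $\lambda>1$, let $\tau^{(k)}$ be the tiling of $\R^2$ by level-$k$ supertiles, and let $\rho>0$ be the asymptotic density of tiles of $\tau$, whose existence is guaranteed by primitivity. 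A level-$k$ supertile $S$ has area of order $\lambda^{2k}$ and perimeter of order $\lambda^{k}$. Because $f_\tau$ takes the value $1/|T|$ on the interior of each tile $T$, for every set $S$ that is a finite union of tiles of $\tau$ one has
\[
\int_S f_\tau\,dx \;=\; \#\{T\in\tau:T\subseteq S\},
\]
so the average of $f_\tau$ over $S$ equals $\rho$ plus the normalized discrepancy $(\#\{T\subseteq S\}-\rho|S|)/|S|$.

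The main analytic step I would then take is to bound this discrepancy by appealing to Theorem \ref{estimates_for_substitutions}. That result provides an estimate of the form $\bigl|\#\{T\subseteq R\}-\rho|R|\bigr|=O(|\partial R|)$ for sufficiently regular regions $R$ inside a primitive substitution tiling. Applied to a level-$k$ supertile it yields a relative error of order $\lambda^{-k}$ in the average of $f_\tau$; in particular, for any two neighboring level-$k$ supertiles, and for any level-$(k{+}1)$ supertile compared with its constituent level-$k$ sub-supertiles, the averages differ by $O(\lambda^{-k})$. Since $\sum_k\lambda^{-k}$ converges, this is precisely the summable comparison inequality required by Proposition \ref{3.1_for_SS}, and plugging it in produces the desired biLipschitz map $\phi$ with $Jac(\phi)=f_\tau$ almost everywhere.

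The hard part, which I expect to be largely absorbed into Proposition \ref{3.1_for_SS} rather than into the final assembly above, is the actual construction of $\phi$. Unlike in \cite{BK02}, where the pieces of the partition are axis-parallel dyadic squares that can be cut along coordinate lines, the supertiles of $\tau$ come in finitely many prototype shapes with curved or at least non-axis-aligned boundaries, so the BK construction cannot be imported verbatim. The star-shaped hypothesis is what I would use to salvage the argument: for each pair of prototype tiles $\pair{T}{p}$ and $\pair{T'}{p'}$ the biLipschitz map $T\to T'$ with prescribed Jacobian is built in polar coordinates, matching $\theta$-sectors as in Definition \ref{Star_shapes_terminology}, the piecewise differentiable boundary assumption guaranteeing uniform Lipschitz control when the sector maps are glued together along their common rays. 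Once this technical machinery is in place inside Proposition \ref{3.1_for_SS}, the discrepancy estimate above is the only further arithmetic input needed to conclude Theorem \ref{Main_McMullen's_question}.
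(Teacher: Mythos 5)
Your proposal is correct and follows essentially the same route as the paper: reduce to Proposition \ref{3.1_for_SS} with $f=f_\tau$, use the identity $\int_Tf_\tau=\#\{\mbox{tiles of }\tau\mbox{ in }T\}$, and invoke Theorem \ref{estimates_for_substitutions} to show the relative discrepancy between $\rho\absolute{T}$ and the tile count decays geometrically in $m$, so that $\prod_m E(m)<\infty$. The one caveat is that Theorem \ref{estimates_for_substitutions} as stated bounds the discrepancy by $C\delta^m$ for some $\delta\in(0,\xi^2)$ rather than by the perimeter $O(\xi^m)$ that you quote, but since $(\delta/\xi^2)^m$ is still geometrically small this does not affect the argument.
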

 
\begin{cor}\label{Penrose_Corollary}
For any Penrose tiling $\tau$ there is a biLipschitz homeomorphism $\Phi:\R^2\to\R^2$ such that $Jac(\phi)=f_\tau$ a.e.
\end{cor}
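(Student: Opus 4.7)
The plan is to reduce the theorem to a Burago-Kleiner-type averaging criterion over the natural substitution hierarchy of $\tau$, and then to verify that criterion using the structural estimates for primitive substitutions.

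First, I would apply Proposition \ref{3.1_for_SS}, which extends the sufficient condition of \cite{BK02} from dyadic squares to the hierarchical supertiles of a star-shaped substitution tiling. Roughly, this proposition reduces the question of realizing $f_\tau$ as a Jacobian to a bounded mean oscillation condition along the hierarchy: for each level-$n$ supertile $T^{(n)}$ of $\tau$, the normalized integral $\frac{1}{|T^{(n)}|}\int_{T^{(n)}} f_\tau$ must approach a common limit $c$ (the global average of $f_\tau$) with errors $\epsilon_n$ that are summable in $n$ uniformly over supertiles. The star-shaped hypothesis is used inside this proposition to decompose each tile into $\theta$-sectors $T^{(\theta)}$, on which radial biLipschitz redistributions of mass can be built explicitly; piecewise differentiability of boundaries keeps the Lipschitz constants uniformly controlled across the interfaces between tiles and between levels of the hierarchy.

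Next, I would verify the averaging condition via Theorem \ref{estimates_for_substitutions}. Since $\int_T f_\tau = 1$ for every tile $T$ by Definition \ref{f_tau}, the integral $\int_{T^{(n)}} f_\tau$ equals the number of tiles contained in the level-$n$ supertile $T^{(n)}$, while the area $|T^{(n)}|$ scales by $\lambda^{2n}$ where $\lambda$ is the inflation factor of the substitution. The condition required by Proposition \ref{3.1_for_SS} therefore translates into a quantitative statement about how closely the tile counts inside a supertile match the densities predicted by the Perron-Frobenius eigenvector of the substitution matrix. For a primitive substitution, the estimate quoted from \cite{S09} gives exactly this with a geometric decay rate in $n$, so the errors $\epsilon_n$ are summable, and Proposition \ref{3.1_for_SS} then produces the desired biLipschitz $\phi$.

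The main obstacle I expect is propagating the summable error condition coherently along every branch of the hierarchy rather than only at a single scale: the Burago-Kleiner construction is iterative and sensitive to how errors from deeper levels interact with those from shallower ones, so the geometric rate from Theorem \ref{estimates_for_substitutions} has to be applied uniformly across the tree of supertiles and combined with the sectorial star-shaped construction without losing this uniformity. Once Theorem \ref{Main_McMullen's_question} is established, Corollary \ref{Penrose_Corollary} is immediate, since every Penrose tiling is a primitive substitution tiling whose prototiles are convex (the two golden rhombi, or equivalently the kite and dart), hence trivially star-shaped with piecewise smooth boundary.
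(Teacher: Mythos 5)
Your proposal follows the paper's route exactly: Theorem \ref{Main_McMullen's_question} is proved by feeding the tile-counting estimate of Theorem \ref{estimates_for_substitutions} into the convergence criterion of Proposition \ref{3.1_for_SS} (using the fact that $\int_T f_\tau=1$ on every tile, so the integral over a supertile is a tile count that deviates from $\rho\absolute{T}$ by $O(\delta^m)$ against an area growing like $(\xi^2)^m$), and the corollary is then immediate because Penrose tilings are primitive substitution tilings with star-shaped, piecewise differentiable tiles. One small correction: the dart prototile is not convex (it has a reflex vertex), but it is star shaped, which is all that Theorem \ref{Main_McMullen's_question} requires.
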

We remark that in the private case where we tile $\R^d$ by lattice cubes (each cube is divided to $2^d$ cubes), $f_\tau$ is a constant function. However, one may ask when a weight function which is constant on lattice cubes is a Jacobian? This question was answered very recently in \cite{ACG11}, where they extend the main result of \cite{BK02} to higher dimensions.

\medskip{\bf Acknowledgements:} This research was supported by the Israel Science Foundation, grant $190/08$. I wish to thank my supervisor, Barak Weiss, that always know how to point me to the right direction, and for his encouragement. I also wish to thank Curtis. T. McMullen for suggesting the Jacobian problem about the Penrose tiling. 

\section{Preliminaries and Previous Results}\label{Substitution_tilings}
A set $T\subseteq\R^d$ is a \emph{tile} if it is homeomorphic to a closed $d$-dimensional ball. A \emph{tiling} of a set $U\subseteq\R^d$ is a countable collection of tiles, with pairwise disjoint interiors, such that their union is equal to $U$. A tiling $P$ of a bounded set $U\subset\R^d$ is called a \emph{patch}. We call the set $U$ the \emph{support of $P$} and we denote it by $supp(P)$. Given a collection of tiles $\mathcal{F}$, denote by $\mathcal{F}^*$ the set of all patches by the elements of $\mathcal{F}$.

Let $\xi>1$ and let $\mathcal{F}=\{T_1,\ldots,T_k\}$ be a set of $d$-dimensional tiles. 
\begin{definition}
A \emph{substitution} is a mapping $H:\mathcal{F}\to\xi^{-1}\mathcal{F}^*$ such that $supp(T_i)=supp(H(T_i))$ for every $i$. Namely, it is a set of dissection rules say how to divide the tiles to other tiles from $\mathcal{F}$, with a smaller scale. We may also apply $H$ to collections of tiles. The constant $\xi$ is called the \emph{inflation constant} or the \emph{dilation constant} of $H$.
\end{definition}
\begin{definition}
Let $H$ be a substitution defined on $\mathcal{F}$. Consider the following set of patches: \[\mathcal{P}=\left\{(\xi H)^m(T):m\in\N\:,\: T\in\mathcal{F}\right\}.\] The \emph{substitution tiling space} $X_H$ is the set of all tilings of $\R^d$ that for every patch $P$ in them there is a patch $P'\in\mathcal{P}$ such that $P$ is a sub-patch of $P'$. Every tiling $\tau\in X_H$ is called a \emph{substitution tiling} of $H$. 
\end{definition}
Consider the following equivalence relation on tiles: $T_i\sim T_j$ if there exists an isometry $O$ such that $T_i=O(T_j)$ and $H(T_i)=O(H(T_j))$. We call the representatives of the equivalence classes \emph{basic tiles}, and denote them by $\{\mathcal{T}_1,\ldots,\mathcal{T}_n\}$. By this definition, we can also think of $H$ as a set of dissection rules on the basic tiles, and extend it to collections of tiles as before. For a tile $T$ in the tiling we say that $T$ is \emph{of type $i$} if it is equivalent to $\mathcal{T}_i$.

\begin{prop}\label{tau_m_Def}
If $H$ is a primitive substitution then $X_H\neq\emptyset$ and for every $\tau\in X_H$ and for every $m\in\N$ there exists a tiling $\tau_m\in X_H$ that satisfies $(\xi H)^m(\tau_m)=\tau$.   
\end{prop} 
\begin{proof}
See \cite{Ro04}.
\end{proof}

Given a tiling $\tau=\tau_0\in X_H$, for every $m\in\N$ we fix a tiling $\tau_m$ as in Proposition \ref{tau_m_Def}. $\tau_m$ is called the \emph{m'th inflation of $\tau$}.
\begin{definition}
A subset $Y\subseteq\R^d$ is called \emph{relatively dense} if there is an $R>0$ such that $dist(x,Y)\le R$ for every $x\in\R^d$. It is \emph{uniformly discrete} if there is an $r>0$ such that for every $y_1\neq y_2\in Y$ we have $dist(y_1,y_2)\ge r$. $Y$ is a \emph{separated net} if it is relatively dense and uniformly discrete.
\end{definition}
A tiling of $\R^d$ by finitely many tiles gives rise to a relatively dense set by placing one point in each tile. Moving each point a bounded distance in its corresponding tile one also gets a separated net, and it is easy to see that any two such nets are biLipschitz equivalent to each other. That is, there is a bijective map $\phi$ between them such that $\phi$ and $\phi^{-1}$ are both Lipschitz maps. Giving a tiling, we fix one separated net as above and denote it by $Y$. As we mentioned in the introduction, there is a deep connection between the question whether $Y$ is biLipschitz equivalent to $\Z^d$ and the question that we study here.

We use $\absolute{T}$ to denote the area of $T$ (defined by the Lebesgue measure) and $\#F$ for the number of elements in a finite set $F$. We use the separated net $Y$, that corresponds to a tiling, to count the number of tiles in a patch $P$, since $\#(supp(P)\cap Y)=\#\{T\in\tau:T\in P\}$. For short we denote this quantity by $\#(P\cap Y)$.

\begin{lem}\label{Discrepancy_estimate}
Let $\tau$ be a primitive substitution tiling of $\R^d$, then there are positive constants $\kappa<\xi^d,c,\alpha$, that depends only on $\tau$, such that for every $m$ and $T\in\tau_m$, we have
\[\absolute{\#(T\cap Y)-\alpha\cdot\absolute{T}}\le c\cdot\kappa^m\]
\end{lem}
\begin{proof}
See \cite{S11}, Lemma $4.3$.
\end{proof}
Note that if $T\in\tau_m$ is of type $i$ then $\absolute{T}=(\xi^d)^m\absolute{\mathcal{T}_i}$. Then we have the following immediate corollary:
\begin{cor}
Let $\tau$ be a primitive substitution tiling of $\R^d$, then there are positive constants $\eta<1,c,\alpha$, that depends only on $\tau$, such that for every $m$ and $T\in\tau_m$, we have
\begin{equation}\label{Discrepancy_corollary}
\max\left\{
\frac{\absolute{\#(T\cap Y)-\alpha\cdot\absolute{T}}}{\alpha\cdot\absolute{T}},
\frac{\absolute{\#(T\cap Y)-\alpha\cdot\absolute{T}}}{\#(T\cap Y)}\right\}\le c\cdot\eta^m.
\end{equation}
\end{cor}
The number $\alpha$ is the \emph{asymptotic density of $Y$}.

\section{Proof of the Main Theorem}
In this section we prove Theorem \ref{Main_Thm}. We denote by $D_\phi(x)$ the derivative of $\phi:\R^d\to\R^d$ at the point $x$, and we use $Jac(\phi)$ to denote the Jacobian of $\phi$, $Jac(\phi)(x)=\det(D_\phi(x))$. 

\begin{proof}[Proof of Theorem \ref{Main_Thm}]
First note that it is enough to find a biLipschitz homeomorphism $\Phi '$ with $Jac(\Phi ')=\beta\cdot f_\tau$ for some positive constant $\beta$. Then $\Phi=\beta^{-1/d}\cdot\Phi '$ is as required.

For every $m\ge 1$ every tile $T\in\tau_m$ is tiled with tiles of $\tau_{m-1}$. Define $f_m^T:T\to\R$ to be the average of $f_\tau$ on $T'$, on every $T'\in\tau_{m-1}$:
\[f_m^T(x)=\begin{cases}
 \frac{\int_{T'}f_\tau}{\absolute{T'}} ,& x\in int(T'),\: T'\subseteq T,\: T'\in\tau_{m-1} 
\\ 0,& \mbox{ otherwise} 
\end{cases}\] 
Obviously $f_m^T$ is a weight function, then by the assumption, for every $m$ and $T\in\tau_m$ there exists a biLipschitz homeomorphism $\varphi_m^T:T\to T$ that satisfies (\ref{Properties_of_local_homeo}). Gluing these homeomorphisms along the boundaries of the tiles of $\tau_m$ gives a biLipschitz homeomorphism $\varphi_m:\R^d\to\R^d$ that satisfies
\begin{equation}\label{phi_m}
\begin{split}
& Jac(\varphi_m)(x)\stackrel{a.e.}=\begin{cases}
\frac{\absolute{T}}{\int_Tf_m}\cdot f_m(x), & x\in int(T),\:T\in\tau_m
\end{cases} \\
& biLip(\varphi_m)\le\left(\frac{\max f_m}{\min f_m}\right)^C,
\end{split}
\end{equation}
where $f_m:\R^d\to\R$ is defined by
\[f_m(x)=\begin{cases}
 \frac{\int_{T'}f_\tau}{\absolute{T'}} ,& x\in int(T'),\: T'\in\tau_{m-1} 
\\ 0,& \mbox{ otherwise} 
\end{cases}.\]
Define $\phi_n:\R^d\to\R^d$ by $\phi_n=\varphi_n\circ\varphi_{n-1}\circ\ldots\circ\varphi_1$. We claim that $(\phi_n)$ has a subsequence that converges uniformly to the desired $\Phi$.

First note that for any patch $P$ we have 
$\int_Pf_\tau=\#\{S\in\tau_0:S\subseteq P\}=\#(P\cap Y)$.
Combining this with (\ref{Discrepancy_corollary}) we obtain
\[\begin{cases}\alpha^{-1}\max{f_m}\le\max_{T\in\tau_{m-1}}\limits\left\{\frac{\#(T\cap Y)}{\alpha\absolute{T}}\right\}\le 1+c\eta^m
\\ \frac{\alpha}{\min f_m} \le\max_{T\in\tau_{m-1}}\limits\left\{\frac{\alpha\absolute{T}}{\#(T\cap Y)}\right\}\le 1+c\eta^m\end{cases}
\Longrightarrow\quad \frac{\max f_m}{\min f_m}\le(1+c\eta^m)^2.\]
Then for every $n$ we have
\[biLip(\phi_n)\le\prod_{m=1}^n\left(\frac{\max f_m}{\min f_m}\right)^C\le\left(\prod_{m=1}^\infty 1+c\cdot\eta^m\right)^{2C}.\]
Since 
\[\log\left(\prod_{m=1}^\infty 1+c\cdot\eta^m\right)\le \sum_{m=1}^\infty\log\left(1+c\cdot\eta^m\right)\le \sum_{m=1}^\infty\left(c\cdot\eta^m\right)=\frac{c\eta}{1-\eta},\]
we have a uniform bound for $biLip(\phi_n)$. Then by the Arzela Ascoli we get a biLipschitz homeomorphism $\Phi:\R^d\to\R^d$, with $biLip(\Phi)\le e^{2C\cdot c\cdot\eta/(1-\eta)}$.

Regarding $Jac(\Phi)$, for any $T\in\tau_m$
\begin{equation}\label{integral_f_m=No.of_tiles}
\int_Tf_m=\sum_{\substack{T'\subseteq T \\ T'\in\tau_{m-1}}}\frac{1}{\absolute{T'}}\int_{T'} \#(T'\cap Y)=\#(T\cap Y).
\end{equation}

By (\ref{phi_m}), for every $T\in\tau_m, T'\in\tau_{m-1}$ and for a.e. $x\in T'\subseteq T$ we have
\begin{equation}\label{Jac_phi_m}
Jac(\varphi_m)(x)=\frac{\absolute{T}}{\int_Tf_m}\cdot f_m(x)=\frac{\absolute{T}}{\#(T\cap Y)} \cdot\frac{\#(T'\cap Y)}{\absolute{T'}}. 
\end{equation}
Take $x\in\R^d$, denote by $T_m$ the tile of $\tau_m$ that contains $x$, and suppose that $x\in int(T_0)$. Since for every $m$ the map $\varphi_m$ maps every tile of $\tau_m$ to itself, and by (\ref{Discrepancy_corollary}), we have
\[Jac(\phi_n)(x)=Jac(\varphi_n)(\varphi_{n-1}\circ\ldots\circ \varphi_1(x))\cdot Jac(\varphi_{n-1})(\varphi_{n-2}\circ\ldots\circ \varphi_1(x))\cdot\ldots\cdot Jac(\varphi_1)(x)\stackrel{(\ref{Jac_phi_m})}= \]
\[\frac{\absolute{T_n}\cdot\#(T_{n-1}\cap Y)}{\#(T_n\cap Y)\cdot\absolute{T_{n-1}}} \cdot\:\:\cdots\:\:\cdot 
\frac{\absolute{T_1}\cdot\#(T_0\cap Y)}{\#(T_1\cap Y)\cdot\absolute{T_0}}=
\frac{\absolute{T_n}}{\#(T_n\cap Y)}\cdot\frac{1}{\absolute{T_0}}\xrightarrow{n\to\infty} \frac{\alpha^{-1}}{\absolute{T_0}}=\alpha^{-1}f_\tau(x).\] 
\end{proof}

\section{The Star-Shaped Lemma}
The purpose of this section is to obtain a homeomorphism between star-shaped domains in the plane, with Jacobian $1$ a.e. This result will be used in the next section.
\begin{definition}\label{Star_shapes_terminology}
$T\subseteq\R^d$ is a \emph{star-shaped domain} if there exists a point $p\in int(T)$ such that for every point $x\in T$ the interval between $p$ and $x$ is contained in $T$, that is $\{tp+(1-t)x:t\in[0,1]\}\subseteq T$. We denote $\pair{T}{p}$ a star-shaped domain $T$ with a point $p$ as above. For short, we say that \emph{p sees all of T} for this property. Given a star-shaped domain $\pair{T}{p}$ in $\R^2$, and assume that $p=0$, every $\theta\in[0,2\pi]$ defines a \emph{$\theta$-sector of $T$} in polar coordinates by: 
\[T^{(\theta)}=\left\{(r,\alpha)\in T:\alpha\in[0,\theta]\right\}.\]
\end{definition}

\begin{lem}\label{star_shaped_lemma}
Suppose $\langle T_1,p_1\rangle, \langle T_2,p_2\rangle$ are two star-shaped domains in the plane, with piecewise differentiable boundary and with the same area, then there is a unique homeomorphism $\psi:T_1\to T_2$, such that: 
\begin{itemize}
\item 
$\psi(p_1)=p_2$.
\item
$\psi$ maps $\partial T_1$ injectively onto $\partial T_2$. 
\item
$\psi$ maps every sector of $T_1$ to a sector of $T_2$ with the same area.
\item
$Jac(\psi)=1$ a.e.
\end{itemize}
\end{lem}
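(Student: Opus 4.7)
After translating so that $p_1 = p_2 = 0$, I would describe each star shape in polar coordinates by its radial function $R_i : [0, 2\pi] \to (0, \infty)$, so that $T_i = \{(r, \theta) : 0 \le r \le R_i(\theta)\}$. Because $p_i \in int(T_i)$ and $\partial T_i$ is piecewise differentiable, each $R_i$ is piecewise differentiable and uniformly bounded below by a positive constant.

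The natural candidate for $\phi$ is the map which, in polar coordinates, linearly stretches each ray at angle $\theta$ onto a ray at a new angle $g(\theta)$, where $g$ is an angular reparametrization chosen so that sectors are matched by area. Concretely, define $g : [0, 2\pi] \to [0, 2\pi]$ by the relation
\[\int_0^\theta R_1(\alpha)^2 \, d\alpha = \int_0^{g(\theta)} R_2(\beta)^2 \, d\beta,\]
which is an absolutely continuous, strictly increasing bijection (well-defined since $\absolute{T_1}=\absolute{T_2}$) with $g'(\theta) = R_1(\theta)^2 / R_2(g(\theta))^2$ a.e., and then set, in polar,
\[\phi(r, \theta) = \left(\frac{r \, R_2(g(\theta))}{R_1(\theta)}, \, g(\theta)\right).\]
The point is that the angular stretch factor $g'(\theta)$ and the radial stretch factor $(R_2(g(\theta))/R_1(\theta))^2$ are reciprocals of each other, which is precisely the condition that will force unit Jacobian after passing through the polar change-of-variables formula.

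I would then verify each required property by direct inspection. The map is a homeomorphism, since $g$ is a homeomorphism of $[0,2\pi]$ and the radial component is strictly increasing and linear in $r$; it sends $0$ to $0$ and $(R_1(\theta),\theta)$ to $(R_2(g(\theta)),g(\theta))$, so $\partial T_1$ to $\partial T_2$; by construction of $g$, the sector $T_1^{(\theta)}$ maps to $T_2^{(g(\theta))}$, which has the same area; and the Jacobian identity reduces to the algebraic check $(R_2(g(\theta))/R_1(\theta))^2\, g'(\theta) = 1$, which holds a.e. by the definition of $g$. Uniqueness follows because the equal-area sector condition determines $g$ uniquely, and then on each ray the Jacobian-$1$ requirement becomes the first-order ODE $\psi_\theta(r)\psi_\theta'(r)g'(\theta)=r$ with $\psi_\theta(0)=0$ and $\psi_\theta(R_1(\theta))=R_2(g(\theta))$, which integrates to the unique linear solution above.

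The main obstacle is purely regularity: since $R_1$ and $R_2$ are only piecewise differentiable, $g$ need not be $C^1$ and $\phi$ is only differentiable almost everywhere, so the polar change-of-variables identity and the ODE argument for uniqueness have to be carried out in the a.e.\ sense; but the absolute continuity of $g$, together with the explicit linear dependence of $\phi$ on $r$ along each ray, reduces this to a standard application of Fubini and the fundamental theorem of calculus for absolutely continuous functions.
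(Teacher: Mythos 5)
Your construction is essentially the paper's: the paper reduces to the case where one domain is a ball and defines the same ray-to-ray map with the angular reparametrization determined by matching $\frac{1}{2}\int R^2$ (sector areas), leaving the verification to the reader, while you write the same formula directly between $T_1$ and $T_2$ and carry out the Jacobian and uniqueness checks explicitly. This is the same approach, correctly executed and somewhat more detailed than the paper's own proof.
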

\begin{proof}
It suffices to show that there is such a homeomorphism between the unit ball $B=B(0,1)$ and another star-shaped domain $\langle T,0\rangle$ with the same area. 

Define a mapping $\psi:B\to T$ by
\[\psi(r\cos(\theta),r\sin(\theta))=r\cdot R(\beta)\cdot(\cos(\beta),\sin(\beta)),\]
where $R(\kappa)$ is the distance between $0$ and $\partial T$ in direction $\kappa\in[0,2\pi]$, and $\beta$ is an angle that is defined by the following equation: 
\[\frac{1}{2}\int_0^{\beta}R^2(t)dt=\frac{\theta}{2}.\]
Namely, for every $\theta$ we choose $\beta=\beta(\theta)$ such that the sector of angle $\beta$ in $T$ has the same area as the sector of angle $\theta$ in $B$.
\[\xygraph{
!{<0cm,0cm>;<1cm,0cm>:<0cm,1cm>::}
!{(-7.5,0) }*{}="-x1"
!{(-4.5,0) }*{}="x1"
!{(-6,1.5) }*{}="y1"
!{(-6,-1.3) }*{}="-y1"
!{(-1.5,0) }*{}="-x2"
!{(1.5,0) }*{}="x2"
!{(0,1.5) }*{}="y2"
!{(0,-1.3) }*{}="-y2"
!{(-6,0) }*[o]=<70pt> \hbox{}="o"*\frm{o}
!{(0,0);a(20)**{}?(1.4)}*{}="a1"
!{(0,0);a(60)**{}?(0.8)}*{}="a2"
!{(0,0);a(100)**{}?(1.3)}*{}="a3"
!{(0,0);a(130)**{}?(0.8)}*{}="a4"
!{(0,0);a(160)**{}?(1.4)}*{}="a5"
!{(0,0);a(190)**{}?(0.8)}*{}="a6"
!{(0,0);a(230)**{}?(1.4)}*{}="a7"
!{(0,0);a(275)**{}?(0.8)}*{}="a8"
!{(0,0);a(310)**{}?(1.3)}*{}="a9"
!{(0,0);a(350)**{}?(0.8)}*{}="a10"
!{(-6,0);a(0)**{}?(0)}*{}="origin1"
!{(-5.6,1.05) }*{}="angleline1"
!{(0,0);a(0)**{}?(0)}*{}="origin2"
!{(0,0);a(75)**{}?(0.87)}*{}="angleline2"
!{(0,0);a(130)**{}?(1.2)}*{^{R(\kappa)}}="R_name"
!{(-5.8,0) }*{}="angle11"
!{(-5.9,0.2) }*{}="angle12"
!{(0,0);a(0)**{}?(0.2)}*{}="angle21"
!{(0,0);a(75)**{}?(0.2)}*{}="angle22"
"origin1"-"angleline1" "origin2"-"angleline2"
"angle11"-@/^-0.05cm/"angle12"_{\theta} "angle21"-@/^-0.05cm/"angle22"_{\beta}
"a1"-"a2" "a2"-"a3" "a3"-"a4" "a4"-"a5" "a5"-"a6"
"a6"-"a7" "a7"-"a8" "a8"-"a9" "a9"-"a10" "a10"-"a1"
"-x1":"x1" "-y1":"y1"
"-x2":"x2" "-y2":"y2"
}\]
It is now left to the reader to check that $\psi$ satisfies the requirements.
\end{proof}

\section{Application for Star-Shaped Substitution Tilings}\label{star_shaped_subs_tilings}
\begin{definition}\label{Def:_star_shaped_sub}
Let $\tau$ be a primitive substitution tiling $\R^d$. We say that $\tau$ is a \emph{star-shaped substitution tiling} if every tile of $\tau$ is star-shaped with a piecewise differentiable boundary.  
\end{definition}
In this section we prove Proposition \ref{3.2_for_SS}, that shows that the hypothesis of Theorem \ref{Main_Thm} is satisfied for star-shaped substitution tilings of the plane. Proposition \ref{3.2_for_SS} generalized Proposition $3.2$ of \cite{BK02} from dyadic lattice square tiling to any star-shaped substitution tiling. The proof  is obtained by repeating the steps of their proof, with some modifications to our case.

\begin{prop}\label{3.2_for_SS}
Let $\pair{T}{p}$ be a star-shaped domain, with a partition to smaller star-shaped domains $\pair{T_1}{p_1},\ldots,\pair{T_n}{p_n}$. Then there is a constant $C_1$ such that for every weight function $f:T\to(0,\infty)$ there is a biLipschitz homeomorphism $\varphi:T\to T$ that satisfies (\ref{Properties_of_local_homeo}), with $C_1$ instead of $C$.
\end{prop}

For the proof of Proposition \ref{3.2_for_SS} we need the following definitions:
\begin{definition}\label{contraction_def}
Let $\pair{T}{p}$ be a star-shaped domain and assume that $\absolute{T}=\absolute{B}$, where $B=B(0,1)$. Let $\psi:B\to T$ be the homeomorphism from Lemma \ref{star_shaped_lemma}. For a given function $f:T\to\R$, we say that \emph{$f$ is constant on the elevation lines of $T$} if $f$ is constant on (one dimensional) sets of the form $\psi(\partial B(0,r))$. In a similar manner we can define objects like \emph{contraction around $p$}, \emph{star-shaped annulus}, \emph{neighborhood of the boundary}, etc. We will also use the same terminology for $\psi:S\to T$, where $S$ is a square instead of a ball.
\\ 
\[\xy/r4pc/: 
!{(5.3,-0.15) }*{\xypolygon3{~:{(0.8,0):(0,1.66)::}}}
!{(0,0) }*{\xypolygon3{~:{(0.7,0):(0,1.66)::}}}
!{(0,0) }*{\xypolygon3{~:{(0.6,0):(0,1.66)::}}}
!{(0,0) }*{\xypolygon3{~:{(0.5,0):(0,1.66)::}}}
!{(2,0) }*[o]=<70pt> \hbox{}="o"*\frm{o}
!{(0,0) }*[o]=<60pt> \hbox{}="o"*\frm{o}
!{(0,0) }*[o]=<50pt> \hbox{}="o"*\frm{o}
!{(0,0) }*[o]=<40pt> \hbox{}="o"*\frm{o}
\endxy
\xygraph{
!{<0cm,0cm>;<1cm,0cm>:<0cm,1cm>::}
!{(-5,0.5) }*{}="a"
!{(0,0.5) }*{}="b"
!{(-5,-0.5) }*{}="c"
!{(0,-0.5) }*{}="d"
!{(10,0) }*{}="moving_to_the_left"
"a":@/^0.5cm/"b"^\psi "d":@/^0.5cm/"c"^{\psi^{-1}}
}\]
\end{definition}

With this terminology we can now state the two lemmas which play the role of Lemma $3.3$ and Lemma $3.6$ from \cite{BK02}. 
\begin{lem}\label{3.3_for.SS}
Let $\langle T,p\rangle$ be a star-shaped domain. There is a constant $C_2$ with the following property: Suppose $h_1,h_2:T\to\R$ are continuous positive functions, which are constant on the elevation lines of $T$, and $\int_{T}h_1=\int_{T}h_2$. Then there exists a biLipschitz homeomorphism $\phi:T\to T$, which fixes $\partial T$ pointwise, so that $Jac(\phi)=\frac{h_1}{h_2\circ\phi}$ a.e. and 
\begin{equation}\label{biLip_const_Lemma_3.3}
biLip(\phi)\le\left(\frac{\max h_1}{\min h_1}\right)^{C_2} \left(\frac{\max h_2}{\min h_2}\right)^{C_2}.
\end{equation}
\end{lem}

\begin{lem}\label{3.6_for.SS}
Let $\langle T,p\rangle$ be a star-shaped domain and let $A$ be a star-shaped annulus that is obtained by removing a contracted copy of $T$ around $p$. Then there is a constant $C_3$ such that for every $g_1,g_2:A\to \R$, positive Lipschitz functions with 
\[\int_{A}g_1=\int_{A}g_2=\absolute{A},\]
there is a biLipschitz homeomorphism $\phi:A\to A$ with $Jac(\phi)=\frac{g_1}{g_2\circ\phi}$ a.e., and
\[biLip(\phi)\le \left[\frac{\max g_1}{\min g_1}(1+Lip(g_1))\right]^{C_3}\cdot \left[\frac{\max g_2}{\min g_2}(1+Lip(g_2))\right]^{C_3}.\]
Moreover, when $g_1|_{\partial A}=g_2|_{\partial A}$, then $\phi$ can be chosen to fix $\partial A$ pointwise.
\end{lem}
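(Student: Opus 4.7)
The plan is to reduce to the round-annulus case using Lemma \ref{star_shaped_lemma}, adapt the proof of Lemma 3.6 of \cite{BK02} there, and transport back. Let $H:\tilde T\to T$ be the area-preserving homeomorphism of Lemma \ref{star_shaped_lemma}, with $\tilde T$ a disk of area $\absolute{T}$ centered at the origin. Since $H$ sends sectors to sectors of equal area and radial contractions of $\tilde T$ around $0$ to radial contractions of $T$ around $p$, it restricts to a homeomorphism from a round annulus $\tilde A=B(0,R_{\mathrm{out}})\setminus B(0,R_{\mathrm{in}})$ onto $A$. As the tile shape is fixed (up to similarity, in the substitution tiling) with piecewise differentiable boundary, $H$ is biLipschitz with a constant depending only on the shape. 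Setting $\tilde g_i:=g_i\circ H$, the hypotheses $\int_A g_i=\absolute{A}$ transfer to $\int_{\tilde A}\tilde g_i=\absolute{\tilde A}$ because $Jac(H)=1$ a.e., and the oscillation and Lipschitz constants of $\tilde g_i$ are comparable to those of $g_i$. If $\tilde\phi:\tilde A\to\tilde A$ solves the problem for $\tilde g_1,\tilde g_2$ on $\tilde A$, then $\phi:=H\circ\tilde\phi\circ H^{-1}$ solves it on $A$ by the chain rule, with biLipschitz constant multiplied by a shape constant.

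On the round annulus I decompose $\tilde\phi=\tilde\phi_2^{-1}\circ\psi\circ\tilde\phi_1$. Each $\tilde\phi_i$ is the angular redistribution $(r,\theta)\mapsto(r,\eta_i(r,\theta))$ defined by
\[\eta_i(r,\theta)=\int_0^\theta \frac{\tilde g_i(r,\theta')}{\bar g_i(r)}\,d\theta',\qquad \bar g_i(r)=\frac{1}{2\pi}\int_0^{2\pi}\tilde g_i(r,\theta)\,d\theta.\]
It preserves each circle $\{r=\mathrm{const}\}$ and has Jacobian $\tilde g_i/\bar g_i$, so it transports $\tilde g_i\,dA$ to $\bar g_i\,dA$. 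The functions $\bar g_1,\bar g_2$ depend only on $r$, are therefore constant on elevation lines of $\tilde A$, and have integral $\absolute{\tilde A}$; Lemma \ref{3.3_for.SS} then yields a biLipschitz $\psi:\tilde A\to\tilde A$ fixing $\partial\tilde A$ pointwise, with $Jac(\psi)=\bar g_1/(\bar g_2\circ\psi)$. By the chain rule $Jac(\tilde\phi)=\tilde g_1/(\tilde g_2\circ\tilde\phi)$, and the three biLipschitz constants multiply: the bound for $\tilde\phi_i$ is dominated by $[(\max\tilde g_i/\min\tilde g_i)(1+\mathrm{Lip}(\tilde g_i))]^{c}$, since $\partial_\theta\eta_i=\tilde g_i/\bar g_i$ controls the $\theta$-derivative while $\partial_r\eta_i$ brings in $\mathrm{Lip}(\tilde g_i)$ through $\partial_r\tilde g_i$ and $\partial_r\bar g_i$; the bound from Lemma \ref{3.3_for.SS} for $\psi$ is dominated by the same expression in $\bar g_1,\bar g_2$.

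For the moreover clause, the hypothesis $g_1|_{\partial A}=g_2|_{\partial A}$ becomes $\tilde g_1=\tilde g_2$ on $\partial\tilde A$. One modifies the construction by introducing a radial cutoff that forces $\tilde\phi_1,\tilde\phi_2$ to equal the identity in a collar of $\partial\tilde A$, pushing the adjustment into the interior where the matching of boundary values lets Lemma \ref{3.3_for.SS} absorb the discrepancy while still fixing $\partial\tilde A$ pointwise. Since $H$ is a bijection between $\partial\tilde A$ and $\partial A$, the resulting $\phi$ fixes $\partial A$ pointwise. The main obstacle is the biLipschitz estimate for the angular-redistribution maps $\tilde\phi_i$: controlling $\partial_r\eta_i$ requires the Lipschitz bound on $\tilde g_i$, and producing exactly the form $[(\max g/\min g)(1+\mathrm{Lip}(g))]^{C_3}$ demands care in tracking how oscillation and Lipschitz constants propagate through the composition. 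A secondary difficulty is making the cutoff argument for the boundary case rigorous without degrading the estimate, which mirrors the treatment in \cite{BK02}.
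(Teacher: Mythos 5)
Your proposal takes essentially the same route as the paper: conjugate by the measure-preserving map $H$ of Lemma \ref{star_shaped_lemma} to reduce to a round annulus and then run the proof of Lemma 3.6 of \cite{BK02} there (angular redistribution along the level circles followed by a radial map as in Lemma \ref{3.3_for.SS}); the paper states exactly this in one line and leaves the details to the reader. One small simplification for the ``moreover'' clause: no cutoff is needed, since $\tilde g_1=\tilde g_2$ on $\partial\tilde A$ forces $\bar g_1=\bar g_2$ and hence $\tilde\phi_1=\tilde\phi_2$ on the boundary circles, so $\tilde\phi=\tilde\phi_2^{-1}\circ\psi\circ\tilde\phi_1$ already fixes $\partial\tilde A$ pointwise.
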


\begin{proof}[Proof of Lemma \ref{3.3_for.SS}]
As in \cite{BK02}, the general case follows from the special case $h_2\equiv 1$, $\int_Th_1=\absolute{T}$. Indeed, setting $\overline{h_i}=(\absolute{T}/\int_Th_i)h_i$ we have $\int_T\overline{h_i}=\absolute{T}$ for $i\in\{1,2\}$. Applying the result of the special case we get biLipschitz homeomorphisms $\phi_1,\phi_2:T\to T$ with $Jac(\phi_i)=\overline{h_i}$, and 
\[biLip(\phi_i)\le\left(\frac{\max\overline{h_i}}{\min\overline{h_i}} \right)^{C_2}=\left(\frac{\max h_i}{\min h_i}\right)^{C_2}.\]
Then $\phi=\phi_2^{-1}\circ\phi_1$ satisfies $Jac(\phi)(x)=Jac(\phi_2^{-1})(\phi_1(x))\cdot Jac(\phi_1)(x)= \frac{\overline{h_1}(x)}{\overline{h_2}(\phi_2^{-1}\circ\phi_1(x))}= \frac{h_1}{h_2\circ\phi}(x)$ a.e., and $biLip(\phi)$ satisfies (\ref{biLip_const_Lemma_3.3}).

For that special case, we denote $h:=h_1$, and assume without loss of generality that $\absolute{T}=\absolute{S_1}$, where $S_r=\left\{\binom{x}{y}\in\R^2:\norm{\binom{x}{y}}_\infty\le r\right\}$, and $\norm{\binom{x}{y}}_\infty=\max\{\absolute{x},\absolute{y}\}$. Let $\psi:S_1\to T$ be as in Lemma \ref{star_shaped_lemma}. Denote by $f=h\circ\psi$, then $f:S_1\to\R$ satisfies the conditions of Lemma $3.3$ from \cite{BK02}. Define $g:[0,1]\to[0,1]$, and a biLipschitz homeomorphism $\widetilde{\phi}:S_1\to S_1$, by 
\[g(r)=\frac{1}{2}\sqrt{\int_{S_r}f},\qquad\mbox{ and }\qquad\widetilde{\phi}(x)=g(\norm{x}_\infty)\cdot\frac{x}{\norm{x}_\infty}.\]
It was proved in [BK02, proof of Lemma 3.3] that 
\begin{equation}\label{BiLip_phi'_3.3}
Jac(\widetilde{\phi})=f\mbox{ a.e. },\quad 
biLip(\widetilde{\phi})\le  \: k_1\frac{\max f}{\min f}=k_1\frac{\max h}{\min h},
\end{equation} 
and, when $\frac{\max h}{\min h}$ is close to $1$
\begin{equation}\label{Norm_D(phi'-I)_3.3}
\norm{D_{\widetilde{\phi}}-I}\le \: k_2\left(\frac{\max f}{\min f}-1\right)=k_2\left(\frac{\max h}{\min h}-1\right),
\end{equation}
where $k_1$ and $k_2$ are independent of $h$.

Define $\phi=\psi\circ\widetilde{\phi}\circ \psi^{-1}:T\to T$. Then $\phi$ is a biLipschitz homeomorphism that satisfies:
\[Jac(\phi)(x)=
\underbrace{Jac(\psi)(\widetilde{\phi}\circ\psi^{-1}(x))}_{=1}
\cdot\underbrace{Jac(\widetilde{\phi})(\psi^{-1}(x))}_{=f(\psi^{-1}(x))} 
\cdot\underbrace{Jac(\psi^{-1})(x)}_{=1}= h(x) \mbox{ a.e.}\]
By (\ref{BiLip_phi'_3.3}) and (\ref{Norm_D(phi'-I)_3.3}), there are $C,C'$, that depend on $\psi$, such that
\begin{equation}\label{BiLip_phi_3.3}
biLip(\phi)\le biLip(\psi)\cdot biLip(\widetilde{\phi})\cdot biLip(\psi^{-1})\le k_1\cdot C\cdot\frac{\max f}{\min f}=k_1'\cdot\frac{\max h}{\min h},
\end{equation}
and when $\frac{\max h}{\min h}$ is close to $1$,
\[\norm{(D_\phi-I)(x)}=\norm{D_{\psi\circ(\widetilde{\phi}-I)\circ \psi^{-1}}(x)}\le
C'\cdot\norm{(D_{\widetilde{\phi}}-I)(\psi^{-1}(x))}=k_2'\left(\frac{\max h}{\min h}-1\right),\]
where $k_1'$ and $k_2'$ do not depend on $h$.
This implies that 
\begin{equation}\label{Norm_D(phi-I)_3.3}
\norm{D_\phi^{\pm 1}}\le\left(\frac{\max h}{\min h}\right)^{k_2''},
\end{equation}
where $k_2''$ does not depend on $h$. Combining (\ref{BiLip_phi_3.3}) and (\ref{Norm_D(phi-I)_3.3}) we get
\[biLip(\phi)\le\left(\frac{\max h}{\min h}\right)^{C_2},\]
as required.
\end{proof}
The proof of Lemma \ref{3.6_for.SS} is obtained in a similar way by following the proof of Lemma $3.6$ of \cite{BK02}, and using $\psi$ from Lemma \ref{star_shaped_lemma} as we did in the proof of Lemma \ref{3.3_for.SS}.   

Finally, before we approach the proof of Proposition \ref{3.2_for_SS} we need the following claim:
\begin{claim}\label{The_special_point_q}
Let $\pair{T}{p}$ be a star-shaped domain, with a partition to smaller star-shaped domains $\pair{T_1}{p_1},\ldots,\pair{T_n}{p_n}$. For an $r\in(0,1)$ we denote by $T_i^r$ the contraction of $T_i$ by $r$ around $p_i$. Then there exists an $r>0$ and a point $q\in int(T)\smallsetminus\bigcup T_i^r$ such that for every $i\in\{1,\ldots,n\}$ and $x\in T_i^r$, the interval between $q$ and $x$ is contained in $T$.
\end{claim}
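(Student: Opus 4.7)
The plan is to take $q$ as a small perturbation of $p$. Since $p$ sees all of $T$ and each contracted copy $T_i^r$ shrinks to the point $p_i$ as $r\to 0$, any $q$ sufficiently close to $p$ will still see all of $\bigcup_i T_i^r$ through $T$ provided $r$ is small enough; the only extra issue is to make sure $q$ can be chosen outside the finitely many $T_i^r$, which will follow from an area comparison.

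The main obstacle is the first step: showing that each segment $[p,p_i]$ lies in $int(T)$, not merely in $T$. I would argue as follows. Since $p_i\in int(T_i)\subseteq int(T)$, fix $\delta>0$ with $B(p_i,\delta)\subseteq T$; for $t\in[0,1)$ and any $x'\in B(p_i,\delta)$, star-shapedness of $T$ with respect to $p$ gives $[p,x']\subseteq T$, hence the point $tp+(1-t)x'=(tp+(1-t)p_i)+(1-t)(x'-p_i)$ belongs to $T$. As $x'$ varies over $B(p_i,\delta)$, this shows $B\bigl(tp+(1-t)p_i,(1-t)\delta\bigr)\subseteq T$, so $tp+(1-t)p_i\in int(T)$; the endpoint $t=1$ is $p\in int(T)$ by hypothesis. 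Thus the compact set $K=\bigcup_{i=1}^n[p,p_i]$ lies in the open set $int(T)$, so there is $\varepsilon>0$ whose $\varepsilon$-neighborhood $K_\varepsilon$ is still contained in $int(T)$; after shrinking $\varepsilon$ I may also assume $B(p,\varepsilon/3)\subseteq int(T)$.

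The remaining steps are routine. Choose $r>0$ small enough that (i)~$T_i^r\subseteq B(p_i,\varepsilon/3)$ for every $i$ (which holds as soon as $r\cdot\mathrm{diam}(T_i)<\varepsilon/3$, using $T_i^r=p_i+r(T_i-p_i)$), and (ii)~$\sum_i|T_i^r|=r^2|T|$ is strictly less than the area of $B(p,\varepsilon/3)$, so that $B(p,\varepsilon/3)\setminus\bigcup_i T_i^r$ is nonempty. Any $q$ in this set automatically lies in $int(T)\setminus\bigcup_i T_i^r$, and for $x\in T_i^r$ and $t\in[0,1]$ the triangle inequality gives $|(tq+(1-t)x)-(tp+(1-t)p_i)|\le t|q-p|+(1-t)|x-p_i|\le\varepsilon/3$, so $tq+(1-t)x\in K_\varepsilon\subseteq T$. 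Hence $[q,x]\subseteq T$, which is what the claim demands.
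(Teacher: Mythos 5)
Your proof is correct, and its central idea coincides with the paper's: choose $q$ as a small perturbation of $p$, shrinking $r$ so that the contracted pieces stay well inside $T$ and cannot swallow the whole neighborhood of $p$. The execution differs in two respects. The paper establishes the stronger intermediate statement that every $q$ sufficiently close to $p$ sees \emph{all} of $T_{1-\varepsilon}=\{x\in T: d(x,\partial T)\ge\varepsilon\}$, via a sequential-compactness contradiction that silently invokes the standard fact that the segment from $p$ to an interior point of $T$ stays in $int(T)$; you instead prove that fact explicitly (your ball-translation argument is exactly right) and only control an $\varepsilon$-tube around the finitely many segments $[p,p_i]$, which is all the claim needs. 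You also replace the paper's unelaborated assertion that $\bigcup_i T_i^r$ fails to cover $B(p,\delta)$ for small $r$ with a clean area comparison $\sum_i|T_i^r|=r^2|T|<|B(p,\varepsilon/3)|$. Your route is more self-contained and quantitative; the paper's is shorter and yields a uniform seeing statement over the whole $\varepsilon$-core of $T$, which is more than is needed here. Both are valid.
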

\begin{proof}
We say that "\emph{$q$ sees $y$}" if $\left\{tq+(1-t)y:t\in[0,1]\right\}\subseteq T$, and write 
\[T_{1-\varepsilon}=\{x\in T: d(x,\partial T)\ge\varepsilon\}.\]
We first show that for every $\varepsilon>0$ there exists a $\delta>0$ such that for every $q\in B(p,\delta)$ and $y\in T_{1-\varepsilon}$, $q$ sees $y$. Assume otherwise, given an $\varepsilon>0$, for every $n\in\N$ there is a $q_n\in B(p,\frac{1}{n})$ and a point $y_n\in T_{1-\varepsilon}$ such that $q_n$ does not see $y_n$. That is, for every $n$ there is a $t_n\in[0,1]$ such that $z_n=t_nq_n+(1-t_n)y_n\notin T$. We know that $q_n\xrightarrow{n\to\infty}p$, and by passing to subsequences we may assume that $t_n\xrightarrow{n\to\infty}t\in[0,1]$ and $y_n\xrightarrow{n\to\infty}y\in T_{1-\varepsilon}$. Then $z_n\xrightarrow{n\to\infty}z=tp+(1-t)y\notin int(T)$. We have obtained a point $y\in T_{1-\varepsilon}$, in particular $y\in int(T)$, such that the interval between $p$ and $y$ travels out of $int(T)$, a contradiction.

Since $p_1,\ldots,p_n\in int(T)$, we may fix $r_1,\varepsilon>0$ such that for all $0<r<r_1$ we have $\bigcup T_i^r\subseteq T_{1-\varepsilon}$. Let $\delta>0$ be as above, so there is an $r_2>0$ such that the union $\bigcup T_i^{r_2}$ does not cover all of $B(p,\delta)$. So for $r=\min\{r_1,r_2\}$, any point $q\in B(p,\delta)\smallsetminus\bigcup T_i^r$ is as required. 
\end{proof}

We are now ready to prove Proposition \ref{3.2_for_SS}. The proof follows the steps of the proof of Proposition $3.2$ in \cite{BK02}, replacing Lemmas $3.3$ and $3.6$ there by Lemmas \ref{3.3_for.SS} and \ref{3.6_for.SS} from above. For the convenience of the reader we repeat their proof, in our context.    

\begin{proof}[Proof of proposition \ref{3.2_for_SS}]
Let $r>0$ and let $q\in int(T)\smallsetminus\bigcup T_i^r$ be as in Claim \ref{The_special_point_q}. Define
\[S=\left\{y\in T: q\mbox{ sees } y\right\},\]
and let $A$ be the star-shaped annulus that is obtained by removing from $S$ a contracted copy of $S$ around $q$, $S'$, such that $S\smallsetminus S'$ still contains $\bigcup{T_i^r}$.

We may assume that $\int_Tf=\absolute{T}$. Define the following functions:\\ 
Let $f_2:T\to (0,\infty)$ be a Lipschitz function such that $f_2=\min f$ on the complement of $\bigcup{T_i^r}$, and is constant on the elevation lines of each of the $T_i$'s. In addition
\[\int_{T_i}f_2=\int_{T_i}f,\quad
\frac{\max f_2}{\min f_2}\le\left(\frac{\max f}{\min f}\right)^{k_1},
\quad\mbox{ and }\quad 1+Lip(f_2)\le\left(\frac{\max f}{\min f}\right)^{k_1},\]
where $k_1$ is independent of $f$.\\
Let $f_3:T\to (0,\infty)$ be a Lipschitz function such that $f_3=\min f$ on the complement of $A$, $f_3$ is constant on the elevation lines of $S$, and
\[\int_Tf_3=\int_Tf,\quad
\frac{\max f_3}{\min f_3}\le\left(\frac{\max f}{\min f}\right)^{k_2},
\quad\mbox{ and }\quad 1+Lip(f_3)\le\left(\frac{\max f}{\min f}\right)^{k_2},\]
where $k_2$ is independent of $f$.\\ 
Finally, set $f_4=1$.

Since for every $i\in\{1,\ldots,n\}$ we have $\int_{T_i}f_2=\int_{T_i}f$, we can apply Lemma \ref{3.3_for.SS} on $f|_{T_i}$ and $f_2|_{T_i}$, separately for every $i$, and get a biLipschitz homeomorphism $\psi^i_1:T_i\to T_i$ with $Jac(\psi^i_1)=\frac{f}{f_2\circ\psi^i_1}$ a.e. Gluing these homeomorphisms along the boundaries of the $T_i$'s we obtain a biLipschitz homeomorphism $\psi_1:T\to T$ with \[Jac(\psi_1)=\frac{f}{f_2\circ\psi_1} \mbox{ a.e. }\quad\mbox{ and}\quad biLip(\psi_1)\le\left(\frac{\max f}{\min f}\right)^{C_2k_1}.\]
Since for every $x\in T\smallsetminus A$ we have $f_2(x)=f_3(x)=\min f$, $\int_Af_2=\int_Af_3\ge\absolute{A}$. Define $\bar{f_j}=\left(\absolute{A}/\int_Af_j\right)\cdot f_j$, for $j\in\{2,3\}$, so $Lip(\bar{f_j})\le Lip(f_j)$. Applying Lemma \ref{3.6_for.SS} to the star-shaped annulus $A$, with $\bar{f_2}$ and $\bar{f_3}$, we get a biLipschitz homeomorphism $\bar{\psi_2}:A\to A$, that fixes $\partial A$ pointwise, with
\begin{equation}\label{psi_2_from_3.2}
Jac(\bar{\psi_2})=\frac{\bar{f_2}}{\bar{f_3}\circ\bar{\psi_2}}=\frac{f_2}{f_3\circ\bar{\psi_2}} \mbox{ a.e. }\quad\mbox{ and}\quad biLip(\bar{\psi_2})\le\left(\frac{\max f}{\min f}\right)^{C_3k_2}.
\end{equation}
We can extend $\bar{\psi_2}$ to $\psi_2:T\to T$ by defining it to be the identity outside of $A$, and we get a biLipschitz homeomorphism of $T$, satisfying (\ref{psi_2_from_3.2}) (since outside of $A$ we get $\frac{f_2}{f_3\circ\psi_2}(x)=1$). 

Finally, we apply Lemma \ref{3.3_for.SS} again on $f_3$ and $f_4$, to get a biLipschitz homeomorphism $\psi_3:T\to T$ with
\[Jac(\psi_3)=\frac{f_3}{f_4\circ\psi_3} \mbox{ a.e. }\quad\mbox{ and}\quad biLip(\psi_3)\le\left(\frac{\max f}{\min f}\right)^{C_2k_2}. \]

Now define $\phi=\psi_3\circ\psi_2\circ\psi_1$. So $C_1=C_2k_1+C_3k_2+C_2k_2$ satisfies the statement of the proposition, and we have
\[Jac(\phi)(x)=Jac(\psi_3)(\psi_2\circ\psi_1(x))\cdot Jac(\psi_2)(\psi_1(x)) \cdot Jac(\psi_1)(x)=\]
\[\label{Jac_of_product_calc}
\frac{f_3}{f_4\circ\psi_3}(\psi_2\circ\psi_1(x))\cdot \frac{f_2}{f_3\circ\psi_2}(\psi_1(x))\cdot \frac{f}{f_2\circ\psi_1}(x)= \frac{f}{f_4\circ\phi}(x)=f(x)\]
as required.
\end{proof}
Theorem \ref{Main_McMullen's_question} is a direct consequence of Theorem \ref{Main_Thm} and Proposition \ref{3.2_for_SS}. Corollary \ref{Penrose_Corollary} follows directly from Theorem \ref{Main_McMullen's_question}.


\begin{thebibliography}{1}
\bibitem[ACG11]{ACG11} J. Aliste-Prieto, D. Coronel and J. M. Gambaudo, \emph{Linearly repetitive Delone sets are rectifiable}, 	arXiv:1103.5423v3 (2011).

\bibitem[BK98]{BK98} D. Burago and B. Kleiner, \emph{Separated nets in Euclidean space and Jacobians of biLipschitz maps}, Geom. Func. Anal. 8 (1998), no.2, 273-282.

\bibitem[BK02]{BK02} D. Burago and B. Kleiner, \emph{Rectifying separated nets}, Geom. Func. Anal. 12 (2002), 80-92.




\bibitem[McM98]{McM98} C. T. McMullen, \emph{Lipschitz maps and nets in Euclidean space}, Geom. Func. Anal. 8 (1998), no.2, 304-314.

\bibitem[Ro04]{Ro04} E. A. Robinson, Jr. \emph{Symbolic dynamics and tilings of $\R^n$}, Proc. Sympos. Appl. Math. Vol.60 (2004), 81-119.


\bibitem[S11]{S11}  Y. Solomon, \emph{Substitution tilings and separated nets with similarities to the integer lattice}, Israel J. Math. 181 (2011), 445-460. 
\end{thebibliography}
\end{document}